\documentclass[review]{elsarticle}

\journal{}

%% `Elsevier LaTeX' style
\bibliographystyle{elsarticle-num}
%%%%%%%%%%%%%%%%%%%%%%%

\usepackage[margin=1in]{geometry}  % set the margins to 1in on all sides
\usepackage{graphicx}              % to include figures
\usepackage{amsmath}               % great math stuff
\usepackage{amsfonts}              % for blackboard bold, etc
\usepackage{amsthm}                % better theorem environments
\usepackage{amssymb}
%\usepackage[pagewise]{lineno}
%\linenumbers

% various theorems, numbered by section

\newtheorem{thm}{Theorem}[section]
\newtheorem{lem}[thm]{Lemma}
\newtheorem{prop}[thm]{Proposition}

\DeclareMathOperator{\Aut}{Aut}
\DeclareMathOperator{\PGL}{PGL}
\DeclareMathOperator{\PSL}{PSL}
\DeclareMathOperator{\Cay}{Cay}
\DeclareMathOperator{\rate}{rate}
\DeclareMathOperator{\distance}{distance}

  % for bolding symbols
      % for Real numbers
      % for Integers

\usepackage{lipsum}
\makeatletter
\def\ps@pprintTitle{%
 \let\@oddhead\@empty
 \let\@evenhead\@empty
 \def\@oddfoot{}%
 \let\@evenfoot\@oddfoot}
\makeatother

\begin{document}

\begin{frontmatter}

\title{Symmetric Unique Neighbor Expanders and Good LDPC Codes}

\author{Oren Becker\corref{mycorrespondingauthor}}
\address{Institute of Mathematics, Hebrew University, Jerusalem 9190401 ISRAEL}
%%\cortext[mycorrespondingauthor]{Corresponding author}
\ead{oren.becker@mail.huji.ac.il}

\begin{abstract}
An infinite family of bounded-degree 'unique-neighbor' expanders was
constructed explicitly by Alon and Capalbo (2002).
We present an infinite family $\mathcal{F}$ of bounded-degree unique-neighbor
expanders with the additional property that every graph in the family
$\mathcal{F}$ is a \emph{Cayley graph}. This answers a question raised
by Tali Kaufman. Using the same methods, we show that the symmetric
LDPC codes constructed by Kaufman and Lubotzky (2012) are in fact symmetric
under a \emph{simply} transitive group action on coordinates.
\end{abstract}

\begin{keyword}
Unique neighbor expander \sep Error correcting code \sep Cayley graph
\end{keyword}

\end{frontmatter}

\section{Introduction}

An undirected graph $\Gamma=\left(V,E\right)$ is an \emph{$\left(\alpha,\epsilon\right)$-unique-neighbor
expander} if for every subset $X$ of $V$ such that $\left|X\right|\leq\alpha\left|V\right|$,
there are at least $\epsilon\left|X\right|$ vertices in $V\setminus X$
that are adjacent to \emph{exactly} one vertex in $X$. In \cite{alon2002explicit},
Alon and Capalbo construct families of bounded-degree \emph{$\left(\alpha,\epsilon\right)$-}unique-neighbor\emph{
}expanders for some positive $\alpha,\epsilon$. One of the families
constructed in \cite{alon2002explicit} is an infinite family $\mathcal{F}$ of
$6$-regular $\left(\alpha,\epsilon\right)$-unique-neighbor expanders.
This construction is based on a notion of a product of graphs, forming
a product graph $\Gamma$ from a $d$-regular graph $\Gamma'$ and
a graph $\Delta$ on $d$ vertices. It is shown in \cite{alon2002explicit}
that if $\Gamma'$ is an $8$-regular Ramanujan graph, and if $\Delta$
is a certain specific graph on $8$ vertices, then the product graph
$\Gamma$ is an $\left(\alpha,\epsilon\right)$-unique-neighbor expander
for some positive absolute constants $\alpha,\epsilon$.

In response to a question of Tali Kaufman \cite{kaufman_private}, who asked if there is a
infinite family of $\left(\alpha,\epsilon\right)$-unique-neighbor expanders
which are Cayley graphs, we develop a similar graph product in the
context of Cayley graphs. The product of a graph $\Gamma'$ and a graph
$\Delta$ is in general not symmetric, even if both graphs are Cayley
graphs. However, we show that if $\Gamma'$ and $\Delta$ are Cayley
graphs (with respect to the groups $G$ and $H$ respectively), and
if further $\Gamma'$ is bipartite and simply-generator-symmetric
with respect to the group $H$ (i.e. $H$ acts simply transitively
on the generators of $G$ through group automorphisms of $G$), then
the product graph $\Gamma$ can be formed 'symmetrically' and is then
itself a Cayley graph. The graph $\Delta$ in \cite{alon2002explicit}
is indeed a Cayley graph on the cyclic group $C_{8}$. Fortunately,
there are $8$-regular bipartite Ramanujan graphs which are simply-generator-symmetric
with respect to the group $C_{8}$. Such graphs were constructed explicitly
by Lubotzky, Samuels and Vishne in \cite{zbMATH02183844},
as a special case of the construction of Ramanujan complexes. Thus,
we show that the method of \cite{alon2002explicit}, applied in a symmetric manner to this infinite family
of Ramanujan graphs from \cite{zbMATH02183844}, gives rise
to an infinite family of unique-neighbor expanders which are Cayley graphs.
This gives an affirmative answer to Kaufman's question. We conclude:
\begin{thm}
\label{thm:main-une}For some absolute positive constants $\alpha,\epsilon$,
there is an explicit construction of an infinite family of $\left(\alpha,\epsilon\right)$-unique-neighbor expanders,
such that every graph in the family is a Cayley graph.
\end{thm}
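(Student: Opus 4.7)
The plan is to reduce Theorem~\ref{thm:main-une} to the combination of three ingredients already named in the introduction. First, the Alon--Capalbo product of an $8$-regular graph $\Gamma'$ with a specific graph $\Delta$ on $8$ vertices, which is shown in \cite{alon2002explicit} to be an $(\alpha,\epsilon)$-unique-neighbor expander whenever $\Gamma'$ is Ramanujan. Second, the explicit Ramanujan Cayley graphs constructed in \cite{zbMATH02183844}, which are in addition bipartite and admit a simply transitive action of $C_8$ on their generator sets by group automorphisms. Third, and the new ingredient, a \emph{symmetric} variant of the Alon--Capalbo product, which I would design so that it produces a Cayley graph whenever the inputs have the structure just described.

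To define the symmetric product, I would take $\Gamma' = \Cay(G,S)$ to be $|S|$-regular and suppose that $H$ acts on $G$ by group automorphisms, preserving $S$ setwise and acting simply transitively on $S$. Fixing a basepoint $s_0 \in S$ gives a bijection $H \to S$, $h \mapsto h\cdot s_0$, and because $H$ acts on $G$ by automorphisms, this bijection is compatible with left multiplication by $G$; thus the $d = |S|$ half-edges at every vertex $g \in G$ can be consistently indexed by $H$. If $\Delta = \Cay(H,T)$, I would then glue a copy of $\Delta$ inside each vertex of $\Gamma'$ using this indexing. The resulting graph has vertex set $G \times H$ and an edge set which, by design, is invariant under a natural simply transitive action of the semidirect product $G \rtimes H$, making it a Cayley graph on $G \rtimes H$. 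The bipartiteness hypothesis on $\Gamma'$ enters here: it allows one to orient the edges of $\Gamma'$ coherently across the two sides of the bipartition, so that $h$ and $h^{-1}$ are not confused at the two endpoints of an edge.

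Finally, I would apply the symmetric product to the LSV family with $H = C_8$ and with $\Delta$ the Alon--Capalbo graph on $C_8$. This yields an infinite family of Cayley graphs on groups of the form $G_i \rtimes C_8$. Combinatorially each such graph is an instance of the original Alon--Capalbo product, with a Ramanujan input $\Gamma'$, so it inherits the $(\alpha,\epsilon)$-unique-neighbor expansion property with absolute constants independent of $i$. The main obstacle I expect is the middle step: I must verify both that the $G$-translation and the $H$-action on half-edges interact well enough for the gluing of $\Delta$-copies to be preserved by the full $G \rtimes H$-action, and that the resulting graph really does coincide up to isomorphism with the asymmetric Alon--Capalbo product, so that the expansion analysis of \cite{alon2002explicit} applies verbatim.
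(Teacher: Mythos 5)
Your overall route is the same as the paper's: take the Alon--Capalbo product of the $8$-regular bipartite LSV Ramanujan Cayley graphs with the graph $\Delta=\Cay(C_8,\{1,4,7\})$, use the simply transitive action of $C_8$ on the generators to make the edge-labellings at all vertices compatible, and deduce unique-neighbor expansion from \cite{alon2002explicit} verbatim. However, the middle step --- the one you yourself flag as the main obstacle --- is stated incorrectly, and the error is exactly at the point where the paper's actual contribution lies. The Alon--Capalbo product of $\Gamma'=\Cay(G,S)$ with $\Delta$ has vertex set $E(\Gamma')$, the set of \emph{undirected} edges of $\Gamma'$, which has cardinality $|G|\,|S|/2=|G|\,|H|/2$. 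The set $G\times H$ you propose as a vertex set indexes the \emph{directed} edges $(g,g\cdot\theta_h(s_0))$ and hence counts every edge twice; a graph on $G\times H$ cannot ``coincide up to isomorphism with the asymmetric Alon--Capalbo product,'' so the expansion analysis would not transfer as you claim. Equivalently: $G\rtimes_\theta H$ does act on the product graph, and acts simply transitively on directed edges, but its action on undirected edges is transitive and \emph{not} free (the stabilizer of an edge contains the element reversing it), so the product is vertex-transitive under $G\rtimes_\theta H$ but is not exhibited as a Cayley graph on that group.

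The correct fix, and the role bipartiteness actually plays, is this: bipartiteness of $\Cay(G,S)$ is equivalent to the existence of an index-$2$ subgroup $K\leq G$ disjoint from $S$ (here $K=\PSL_2(7^n)$ inside $G=\PGL_2(7^n)$). Orienting every edge from its $K$-endpoint to its $Ks_0$-endpoint identifies the undirected edge set with $K\times H$ via $(k,h)\mapsto(k,k\cdot\theta_h(s_0))$, and one then checks (this is Proposition~\ref{prop:line-graph}) that under this bijection the product graph becomes $\Cay\bigl(K\rtimes_\theta H,\Sigma_1^T\cup\Sigma_2^T\bigr)$ with $\Sigma_1^T=\{(1_K,t)\mid t\in T\}$ and $\Sigma_2^T=\{(s_0\cdot\theta_t(s_0^{-1}),t)\mid t\in T\}$: the two generator families account for the two endpoints of an edge, which live on opposite sides of the bipartition and therefore require different formulas. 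Your remark that bipartiteness lets one ``orient the edges coherently'' gestures at exactly this mechanism, but you must then replace $G\rtimes H$ by the index-$2$ subgroup $K\rtimes_\theta H$ throughout; with that correction, and an explicit verification of the isomorphism in the spirit of Proposition~\ref{prop:line-graph}, your argument becomes the paper's proof.
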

Our method has another application: In \cite{zbMATH06294581}, Kaufman
and Lubotzky construct asymptotically good families of symmetric LDPC
error correcting codes. A code $C\subset\mathbb{F}_{2}^{X}$ is \emph{symmetric}
with respect to a group $G$ if there is a transitive group action
of $G$ on $X$, such that the corresponding coordinate-interchanging
action on $\mathbb{F}_{2}^{X}$ preserves $C$. We say that a symmetric
code is \emph{simply-symmetric} if the action of $G$ on $X$ is simply
transitive. The codes in \cite{zbMATH06294581} are based on a product
of a large graph $\Gamma'$ and a small code $B$. This construction
is due to Tanner (\cite{zbMATH03745089}) and Sipser-Spielman (\cite{zbMATH01004587}),
and is referred to as an ``expander code'' in case the graph $\Gamma'$
is a good expander. In case the graph $\Gamma'$ is a Cayley graph,
a more specific construction is defined in \cite{zbMATH05799072}
by Kaufman and Wigderson, and is called a ``Cayley code''.
While Cayley codes are not necessarily symmetric, Kaufman and Wigderson construct
a family symmetric Cayley codes with constant rate, but with normalized distance
$\Omega\left(\frac{1}{\left(\log\log n\right)^2}\right)$, were $n$
is the code length (see Theorem 11 of \cite{zbMATH05799072}).
In \cite{zbMATH06294581},
using the Ramanujan graphs of \cite{zbMATH02183844}, asymptotically
good symmetric Cayley codes are constructed. The construction uses
an infinite family of \emph{bipartite} Ramanujan graphs, but the bipartiteness
is not used in the proof (and similar constructions using non-bipartite
generator-symmetric expanders are possible). We use the bipartiteness
of these Ramanujan graphs, together with the fact that they are \emph{simply}-generator-symmetric,
to show that the codes of \cite{zbMATH06294581} are in fact \emph{simply}
symmetric. In addition to this extra feature, this construction has
the benefit of exhibiting symmetric unique-neighbor expanders and
symmetric error correcting codes in one framework. We conclude:
\begin{thm}
\label{thm:main-codes}There is an infinite family of asymptotically good simply-symmetric
LDPC error correcting codes.
\end{thm}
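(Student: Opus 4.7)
The plan is to show that the Tanner-style codes built by Kaufman and Lubotzky from the bipartite Ramanujan Cayley graphs of Lubotzky--Samuels--Vishne (LSV) and a suitable inner code $B$ are symmetric under a simply transitive group action on the coordinate set. Recall that such a Tanner code has coordinates in bijection with the edges of the underlying graph $\Gamma' = \Cay(G,S)$, with a local parity-check constraint at each vertex $v$ requiring that the restriction of the word to the edges incident to $v$ lies in $B \subseteq \mathbb{F}_2^d$. The group $G$ already acts on vertices simply transitively, but on the edge set this action has $d = |S|$ orbits, so it is only transitive after one also moves between ``generator slots,'' and hence not simply transitive on its own.

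The key observation is that when $\Gamma'$ is \emph{simply}-generator-symmetric with respect to a group $H$ (i.e.\ $H$ acts by automorphisms of $G$ and the induced action on $S$ is simply transitive), the semidirect product $\tilde G := G \rtimes H$ acts on the oriented edge set $G \times S$ by $(g_0,h) \cdot (g,s) = (g_0 \cdot h(g),\, h(s))$. This is simply transitive: given two oriented edges, simple transitivity of $H$ on $S$ pins down $h$ from the generator coordinate, and then $g_0$ is determined from the vertex coordinate. The counts also match, since $|\tilde G| = |G| \cdot d$. Bipartiteness of $\Gamma'$ enters at exactly this point: it lets us identify oriented edges pointing out of the ``left'' partition class with the set of (undirected) Tanner-code coordinates in a canonical, $\tilde G$-equivariant way, avoiding the collapse $(g,s) \sim (gs,s^{-1})$ that would otherwise obstruct simple transitivity on unoriented edges.

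It then remains to verify that this $\tilde G$-action preserves the code. Invariance under $G$ is already established in \cite{zbMATH06294581}. The $H$-part permutes, at each left vertex, the $d$ incident edges via the action of $H$ on $S$, so preservation of the local parity-check reduces exactly to requiring that the inner code $B$ is itself $H$-invariant. This is precisely the hypothesis that makes the Kaufman--Lubotzky code symmetric in their sense, and in the LSV/$C_8$ setting $H$ is cyclic, so an $H$-invariant inner code of constant rate and linear distance is readily available (and is the same $B$ that \cite{zbMATH06294581} use). Asymptotic goodness of the resulting family then carries over directly from \cite{zbMATH06294581}, since the code itself is literally the same; only the symmetry group has been refined.

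The main obstacle I anticipate is bookkeeping rather than a hard estimate: one must set up the bipartite identification of Tanner coordinates with a single free $\tilde G$-orbit so that the $G$-translations and the $H$-automorphisms act consistently with the local coordinatization of $B$ at every vertex on both sides of the bipartition. This is essentially the same coherence issue that will have to be resolved in the Cayley-graph version of the Alon--Capalbo product used for Theorem~\ref{thm:main-une}, and once it is solved there, the same framework handles the code.
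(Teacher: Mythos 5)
There is a genuine gap at the heart of your argument: the group you propose, $\tilde G = G \rtimes H$, cannot act simply transitively on the coordinate set, for the simple reason that it is twice too large. The coordinates of the Tanner code are the \emph{undirected} edges of $\Gamma' = \Cay(G,S)$, of which there are $|G|\,d/2$, whereas $|\tilde G| = |G|\cdot|H| = |G|\,d$. Your action on \emph{oriented} edges $G\times S$ is indeed simply transitive, but the passage to undirected edges is where the argument breaks: the set of oriented edges pointing out of the left partition class $K$ is \emph{not} $\tilde G$-invariant, since any $g_0 \in G\setminus K$ sends an edge based at a vertex of $K$ to one based at a vertex of $Ks_0$. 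Equivalently, the transitive action of $\tilde G$ on undirected edges has point stabilizers of order $2$ (the elements reversing a given edge), so no choice of ``canonical section'' can make it free. This is exactly the obstruction the paper flags when it notes that $|G\rtimes_\theta H|$ is twice the code length.

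The missing idea is to descend to the index-$2$ subgroup $K\rtimes_\theta H$, where $K$ is the index-$2$ subgroup of $G$ disjoint from $S$ (here $K=\PSL_2(q^n)\le\PGL_2(q^n)$); one checks $\theta_h(K)=K$ so this semidirect product is defined. This subgroup has order exactly $|G|\,d/2$, preserves the set of oriented edges based at $K$, and acts simply transitively on it, hence on the undirected edges via the bijection $(k,h)\mapsto\bigl(k,\,k\cdot\theta_h(s_0)\bigr)$ (Proposition \ref{prop:line-graph}). One must then also verify that the code, not just the coordinate set, is invariant: under this identification the defining constraints become two $K\rtimes_\theta H$-orbits (one for the constraints at left vertices, one for those at right vertices), which is the content of Proposition \ref{prop:group-codes}. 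Your remarks on $H$-invariance of the inner code $B$ and on inheriting asymptotic goodness from \cite{zbMATH06294581} are correct and would complete the proof once the group is corrected, but as written the claimed simply transitive action of $G\rtimes H$ does not exist.
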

Finally, we define a variation of the codes of \cite{zbMATH06294581}
which improves the bound on their density from $4094$ to $20$. This
is achieved by a slight variation on the argument of Sipser-Spielman
\cite{zbMATH01004587}.

\section{Edge Transitive Ramanujan Graphs\label{sec:ET-Ramanujan}}

Let $\Gamma=\Cay\left(G,S\right)$ be a Cayley graph where $S$ is a subset
of the group $G$ satisfying $S=S^{-1}$. As a Cayley graph, $\Gamma$ is
automatically vertex-transitive. We say that $\Gamma$
is \emph{generator-symmetric} with respect to the group $H$ and the
group homomorphism $\theta:H\rightarrow\Aut\left(G\right)$
if $\theta\left(H\right)$ preserves $S$ and induces a transitive
action of $H$ on $S$. In this case, the group $G\rtimes_{\theta}H$
acts transitively on the edges of $\Gamma$, and so $\Gamma$ is an
edge-transitive graph. If the action of $H$ on $S$ is simply-transitive
(in other words, if the action is both transitive and free),
then we say that the Cayley graph $\Gamma$ is \emph{simply-generator-symmetric}.
This means, in particular, that $\left|H\right|=\left|S\right|$.

In this case, the action of $G\rtimes_\theta H$ on the \emph{directed} edges of
$\Gamma$ is simply transitive.
However, some elements of $G\rtimes_\theta H$ act on $\Gamma$ by
"reversing an edge".
Therefore, if one regards $\Gamma$ as an undirected graph, the action of
$G\rtimes_\theta H$ on edges, while still transitive, is no longer free.
Our construction in section \ref{sec:Line-Graphs}  is based on the observation
that if $\Gamma$ is a bipartite simply-generator-symmetric Cayley graph,
then there is a subgroup of $G\rtimes_\theta H$ of index $2$ which acts
simply transitively on the $undirected$ edges of $\Gamma$.
For notational convenience, for every $h\in H$, we denote the automorphism
$\theta\left(h\right)\in\Aut\left(G\right)$ by $\theta_{h}$.

In \cite{zbMATH02183844}, Lubotzky, Samuels and Vishne construct
highly symmetric Ramanujan complexes, which we now specialize to the
one dimensional case (i.e. graphs). We get two infinite families of
$d$-regular Ramanujan graphs for each $d=q+1$, where $q$ is an
odd prime power. One is a family of bipartite graphs and the other
is a family of non-bipartite graphs. All of these graphs are Cayley
graphs and are simply-generator-symmetric with respect to the cyclic
group $H=C_{d}$. For a brief overview of this special case see section
4 of \cite{zbMATH06294581}.

Specializing further to the case of bipartite $8$-regular graphs,
we get from \cite{zbMATH02183844} an infinite family of simply-generator-symmetric
bipartite Ramanujan Cayley graphs $\left\{ \Gamma'_{n}\right\} $ with $\Gamma'_{n}=\Cay\left(\PGL_{2}\left(7^{n}\right),S\left(n\right)\right)$
for some set of generators $S\left(n\right)\subset\PGL_{2}\left(7^{n}\right)$
defined in \cite{zbMATH02183844}. The group $\PSL_{2}\left(7^{n}\right)$
is of index $2$ in $\PGL_{2}\left(7^{n}\right)$, and is disjoint
from $S\left(n\right)$. As shown in \cite{zbMATH02183844},
there is a group homomorphism $\theta_{n}:C_{8}\rightarrow\Aut\left(\PGL_{2}\left(7^{n}\right)\right)$
, inducing a simply-transitive action of $C_{8}$ on $S\left(n\right)$.
For an explicit definition of the generators $S\left(n\right)$ and
the action of $C_{8}$, see \cite{zbMATH02183844}, Algorithm
9.2. In particular, Step 3 of this algorithm uses an embedding $C_{8}\cong\mathbb{F}_{7^{2}}^{\times}/\mathbb{F}_{7}^{\times}\hookrightarrow\PGL_{2}\left(7^{n}\right)$
to define the generators $S\left(n\right)$ as the $C_{8}$-orbit
of a certain element $b\in\PGL_{2}\left(7^{n}\right)$, where
$C_{8}$ acts on $\PGL_{2}\left(7^{n}\right)$ by inner automorphisms
through this embedding. Thus, the corresponding homomorphism $\theta_{n}:C_{8}\rightarrow\Aut\left(\PGL_{2}\left(7^{n}\right)\right)$
is as required.

\section{Line Graphs as Cayley Graphs\label{sec:Line-Graphs}}

For a graph $\Gamma'$=$\left(V,E\right)$, the \emph{line graph}
$\Gamma$ of $\Gamma'$ is the graph whose vertex set is $E$, with
vertices $e_{1},e_{2}\in E$ of $\Gamma$ adjacent in $\Gamma$ if
and only if $e_{1}$ and $e_{2}$, as edges of $\Gamma'$, are
incident to a common vertex of $\Gamma'$. In general, the line graph
of a Cayley graph is \emph{not }itself a Cayley graph. In this section,
we show that the line graph of a simply-generator-symmetric \emph{bipartite}
Cayley graph is itself a Cayley graph. Let $G$ be a finite group
and let $S\subset G$ be a symmetric subset of $G$ (i.e. $S=S^{-1}$)
which generates $G$. Let $K$ be a subgroup of $G$ of index $2$
such that $K$ and $S$ are disjoint. Fix a generator $s_{0}\in S$.
Then, the Cayley graph $\Cay\left(G,S\right)$ is bipartite
with the cosets $K$ and $Ks_{0}$ as its left and right sides respectively.
The coset $K$ consists of the vertices connected to $1_{G}$ by a
path of even length, and the coset $Ks_{0}$ consists of the vertices
connected to $1_{G}$ by a path of odd length. Note that for a Cayley
graph $\Cay\left(G,S\right)$, the existence of an index $2$
subgroup $K\leq G$ disjoint from $S$ is in fact equivalent to
$\Cay\left(G,S\right)$ being bipartite. Assume further that
$\Cay\left(G,S\right)$ is simply-generator-symmetric with respect
to the group $H$ and the group homomorphism $\theta:H\rightarrow\Aut\left(G\right)$.
Then, for each element $h\in H$, we have $\theta_{h}\left(K\right)=K$.
Therefore, we can define the semidirect product $K\rtimes_{\theta}H$. Finally, we let $T$ be a subset of $S$.

In this setting, we define two graphs $X$ and $\Gamma$, and show
that they are isomorphic.

First, for every $h\in H$, we define $\sigma_{1}\left(h\right),\sigma_{2}\left(h\right)\in K\rtimes_{\theta}H$
by
\begin{eqnarray*}
\sigma_{1}\left(h\right) & = & \left(1_{K},h\right)\\
\sigma_{2}\left(h\right) & = & \left(s_{0}\cdot\theta_{h}\left(s_{0}^{-1}\right),h\right)
\end{eqnarray*}

and we let $X=\mbox{Cay}\left(K\rtimes_{\theta}H,\Sigma_{1}^{T}\cup\Sigma_{2}^{T}\right)$,
where
\begin{eqnarray*}
\Sigma_{1}^{T} & = & \left\{ \sigma_{1}\left(t\right)\mid t\in T\right\} \\
\Sigma_{2}^{T} & = & \left\{ \sigma_{2}\left(t\right)\mid t\in T\right\}
\end{eqnarray*}

Second, we define a graph $\Gamma=\Gamma\left(G,S,s_0,K,H,\theta,T\right)$.
The vertices of $\Gamma$ are the undirected edges of $\mbox{Cay}\left(G,S\right)$, i.e. $V\left(\Gamma\right)=E\left(\mbox{Cay}\left(G,S\right)\right)$.
For each $g\in G$, we let $E_{g}$ be the set of undirected edges in $\mbox{Cay}\left(G,S\right)$
which are incident to $g$, and we define a bijection $e_{g}:H\rightarrow E_{g}$
by
\[
e_{g}\left(h\right)=\begin{cases}
\left(g,g\cdot\theta_{h}\left(s_{0}\right)\right) & g\in K\\
\left(g\cdot\theta_{h}\left(s_{0}^{-1}\right),g\right) & g\in Ks_{0}
\end{cases}
\]

The edges of $\Gamma$ are $E\left(\Gamma\right)=\left\{ \left(e_{g}\left(h\right),e_{g}\left(ht\right)\right)\mid g\in G,h\in H,t\in T\right\} $.

Note that if $T=T^{-1}$, then $\Gamma$ is an undirected graph, and
if $T=S$, then $\Gamma$ is the line graph of $\mbox{Cay}\left(G,S\right)$.
\begin{prop}\label{prop:line-graph}
The bijection $f:K\rtimes_{\theta}H\rightarrow E\left(\mbox{Cay}\left(G,S\right)\right)$
defined by
\[
f\left(\left(k,h\right)\right)=\left(k,k\cdot\theta_{h}\left(s_{0}\right)\right)
\]
is a graph isomorphism from $X=\mbox{Cay}\left(K\rtimes_{\theta}H,\Sigma_{1}^{T}\cup\Sigma_{2}^{T}\right)$
to $\Gamma=\Gamma\left(G,S,s_0,K,H,\theta,T\right)$.\end{prop}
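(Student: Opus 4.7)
The plan is to verify two things: that $f$ is a bijection from $K \rtimes_\theta H$ onto $E(\Cay(G,S))$, and that $f$ carries the neighborhood of each vertex in $X$ exactly onto the neighborhood of its image in $\Gamma$. For the bijection, I would use the bipartite structure of $\Cay(G,S)$: every undirected edge has a unique endpoint in $K$ and can therefore be written uniquely as $(k, ks)$ with $k \in K$ and $s \in S$. Since $H$ acts simply transitively on $S$ via $h \mapsto \theta_h(s_0)$, the datum $(k,s)$ is equivalent to a pair $(k,h) \in K \times H$, i.e., to an element of $K \rtimes_\theta H$, and $f((k,h)) = (k, k\theta_h(s_0))$ realizes exactly this correspondence. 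I would also observe that, under the $g \in K$ branch of the definition of $e_g$, one has $f((k,h)) = e_k(h)$, which will be convenient for matching up neighbors.

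Next, I would enumerate the edges of $\Gamma$ incident to $e_k(h)$. By definition, such an edge is of the form $(e_g(h'), e_g(h't))$ with $e_g(h') = e_k(h)$ for some $g \in G$, $h' \in H$, $t \in T$. The edge $e_k(h)$ of $\Cay(G,S)$ has exactly two endpoints, namely $k \in K$ and $k\theta_h(s_0) \in Ks_0$, so only two choices of $g$ are possible. For $g = k$ the $K$-branch of $e_g$ immediately forces $h' = h$; for $g = k\theta_h(s_0)$ the $Ks_0$-branch requires $g\theta_{h'}(s_0^{-1}) = k$, which again gives $h' = h$ by the simple-transitivity of the $H$-action on $S$. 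Thus the neighbors of $e_k(h)$ in $\Gamma$ are precisely $e_k(ht)$ and $e_{k\theta_h(s_0)}(ht)$, with $t$ ranging over $T$.

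Finally, using the semidirect-product multiplication $(k_1,h_1)(k_2,h_2) = (k_1\,\theta_{h_1}(k_2),\, h_1 h_2)$, I would compute the two families of neighbors of $(k,h)$ in $X$ and push them through $f$. Right multiplication by $\sigma_1(t) = (1_K, t)$ gives $(k, ht)$, whose image is $f((k,ht)) = (k, k\theta_{ht}(s_0)) = e_k(ht)$, matching the first family. Right multiplication by $\sigma_2(t) = (s_0\,\theta_t(s_0^{-1}), t)$ gives $(k\,\theta_h(s_0)\,\theta_{ht}(s_0^{-1}),\, ht)$, and applying $f$ together with the cancellation $\theta_{ht}(s_0^{-1})\theta_{ht}(s_0) = 1_G$ yields $(k\theta_h(s_0)\theta_{ht}(s_0^{-1}),\, k\theta_h(s_0)) = e_{k\theta_h(s_0)}(ht)$, matching the second family. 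Since $f$ bijects neighborhoods on the nose, it is a graph isomorphism $X \to \Gamma$.

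The main obstacle is not conceptual but bookkeeping in $K \rtimes_\theta H$: keeping straight the twist by $\theta_h$ under multiplication, verifying that $\sigma_2(t)$ really lies in $K \rtimes_\theta H$ (which uses that $s_0 \cdot \theta_t(s_0^{-1})$ is a product of two elements of $S$ and hence of two elements of the coset $Ks_0$, so it falls in $K$), and recognizing that the intended cancellations on the $Ks_0$-side reconstitute $e_{k\theta_h(s_0)}(ht)$ exactly. Once these local checks are in hand, the proof is a short direct computation.
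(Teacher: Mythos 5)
Your proposal is correct and follows essentially the same route as the paper: a direct computation in $K\rtimes_{\theta}H$ showing that $f$ matches the $\Sigma_1^T$- and $\Sigma_2^T$-neighbors of $(k,h)$ with the $\Gamma$-edges at $e_k(h)$ through its two endpoints $k$ and $k\theta_h(s_0)$, which is exactly the paper's verification that $f$ and $f^{-1}$ are both graph homomorphisms. Your additional explicit checks (bijectivity via the unique endpoint in $K$, membership of $\sigma_2(t)$ in $K\rtimes_\theta H$, and the forced equality $h'=h$ from freeness of the $H$-action) are details the paper leaves implicit, not a different method.
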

\begin{proof}
Let $k\in K,h\in H$ and $t\in T$. Direct computation shows that
\begin{eqnarray*}
\left(\left(k,h\right),\left(k,h\right)\cdot\sigma_{1}\left(t\right)\right) & \overset{f\times f}{\longmapsto} & \left(e_{k}\left(h\right),e_{k}\left(ht\right)\right)\\
\left(\left(k,h\right),\left(k,h\right)\cdot\sigma_{2}\left(t\right)\right) & \overset{f\times f}{\longmapsto} & \left(e_{k\cdot\theta_{h}\left(s_{0}\right)}\left(h\right),e_{k\cdot\theta_{h}\left(s_{0}\right)}\left(ht\right)\right)
\end{eqnarray*}
and
\begin{eqnarray*}
\left(e_{k}\left(h\right),e_{k}\left(ht\right)\right) & \overset{f^{-1}\times f^{-1}}{\longmapsto} & \left(\left(k,h\right),\left(k,h\right)\cdot\sigma_{1}\left(t\right)\right)\\
\left(e_{ks_{0}}\left(h\right),e_{ks_{0}}\left(ht\right)\right) & \overset{f^{-1}\times f^{-1}}{\longmapsto} & \left(\left(ks_{0}\theta_{h}\left(s_{0}^{-1}\right),h\right),\left(ks_{0}\theta_{h}\left(s_{0}^{-1}\right),h\right)\cdot\sigma_{2}\left(t\right)\right)
\end{eqnarray*}
Therefore, both $f$ and $f^{-1}$ are graph homomorphisms, and therefore
$f$ is a graph isomorphism.\end{proof}

In particular, Proposition \ref{prop:line-graph} implies that the line graph of a generator symmetric bipartite Cayley graph is itself a Cayley graph.
It may be interesting to characterize the Cayley graphs whose line graph is a Cayley graph.

\section{Symmetric Unique Neighbor Expanders}

We begin by describing the construction of an infinite family of $6$-regular
unique-neighbor expanders by Alon and Capalbo (see \cite{alon2002explicit}).
Let $\Gamma'=\left(V',E'\right)$ be a $d$-regular graph and let
$\Delta=\left(V\left(\Delta\right),E\left(\Delta\right)\right)$ be
a graph on $d$ vertices. For each vertex $v$ of $\Gamma'$, denote
by $E'_{v}$ the set of $d$ edges of $\Gamma'$ which are incident
to $v$. For each vertex $v$ of $\Gamma'$, fix a bijection between
$E'_{v}$ and $V\left(\Delta\right)$. Note that these bijections
\emph{do not }need to be compatible with each other in any way. Form
a new graph $\Gamma$ whose vertex set is $E'$, and where $\gamma_{1},\gamma_{2}\in E'$
are adjacent as vertices of $\Gamma$ if and only if $\gamma_{1}$
and $\gamma_{2}$ are both incident to a common vertex $v$ in $\Gamma'$,
and are neighbors in $\Delta$ under the identification $E'_{v}\leftrightarrow V\left(\Delta\right)$.
Note that the graph $\Gamma$ is a subgraph of the line graph of $\Gamma'$.
By Theorem 2.1 of \cite{alon2002explicit}, if $\Gamma'$ is an $8$-regular
Ramanujan graph, and if $\Delta$ is the $3$-regular graph on $8$
vertices $v_{0},\dotsc,v_{7}$ with $v_{i}$ adjacent to $v_{i-1},v_{i+1},v_{i+4}$
(indices taken modulo $8$), then $\Gamma$ is a $6$-regular $\left(\alpha,1/10\right)$-unique-neighbor
expander, for some positive constant $\alpha$.

In this way, an infinite family of (not necessarily bipartite) $8$-regular
Ramanujan graphs gives rise to an infinite family of $6$-regular
$\left(\alpha,1/10\right)$-unique-neighbor expanders. In \cite{alon2002explicit},
the chosen family of Ramanujan graphs is the one constructed in \cite{zbMATH04079458}
by Lubotzky, Phillips and Sarnak. These Ramanujan graphs are Cayley
graphs and thus are vertex-transitive, but they are not known to be
generator-symmetric.

Instead, we use the $8$-regular simply-generator-symmetric bipartite Ramanujan graphs of \cite{zbMATH02183844}.
Keeping the notation of sections \ref{sec:ET-Ramanujan} and \ref{sec:Line-Graphs}, we define for each $n\geq 1$,
\[
\Gamma_{n}=\Gamma\left(\PGL_{2}\left(7^{n}\right),S\left(n\right),b,\PSL_{2}\left(7^{n}\right),C_{8},\theta_{n},\left\{ 1,4,7\right\}\right)
\]

On one hand, the graphs $\{\Gamma_n\}$ are a special case of the Alon-Capalbo construction. On the other hand,
by Proposition \ref{prop:line-graph}, for each $n\geq1$, the graph $\Gamma_n$ is isomorphic to
$\mbox{Cay}\left(\PSL_{2}\left(7^{n}\right)\rtimes_{\theta_n}C_{8},\Sigma_{1}^{\left\{ 1,4,7\right\}}\cup\Sigma_{2}^{\left\{ 1,4,7\right\}}\right)$.

We have thus proved the following more elaborate version of Theorem \ref{thm:main-une}:
\begin{thm}
For some constant $\alpha>0$, there is an infinite family $\left\{ \Gamma_{n}\right\} $
of $6$-regular $\left(\alpha,1/10\right)$-unique-neighbor expanders,
such that for every $n$, the graph $\Gamma_{n}$ is a Cayley graph
on the group $\PSL_{2}$$\left(7^{n}\right)\rtimes_{\theta_{n}}C_{8}$.
\end{thm}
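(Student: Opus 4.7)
The plan is to exhibit each $\Gamma_n$ simultaneously as (i) an instance of the Alon--Capalbo product construction, so that the unique-neighbor expansion follows from Theorem 2.1 of \cite{alon2002explicit}, and (ii) as the Cayley graph named in the statement, which is immediate from Proposition~\ref{prop:line-graph}.

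First I would identify the auxiliary $3$-regular graph. Since $-1 \equiv 7$ and $-4 \equiv 4 \pmod 8$, the set $T = \{1,4,7\} \subset C_8$ is symmetric, so $\Cay(C_8, T)$ is the undirected $3$-regular graph on vertices $v_0, \dotsc, v_7$ in which $v_i$ is adjacent to $v_{i-1}$, $v_{i+1}$ and $v_{i+4}$. This is exactly the graph $\Delta$ appearing in Theorem 2.1 of \cite{alon2002explicit}. Next I would verify that the definition of $\Gamma(G,S,s_0,K,H,\theta,T)$ in Section~\ref{sec:Line-Graphs} coincides with the Alon--Capalbo product of $\Gamma' = \Cay(G,S)$ with $\Delta$, where at each vertex $g$ of $\Gamma'$ the local identification $E'_g \leftrightarrow V(\Delta) = C_8$ is taken to be the bijection $e_g \colon C_8 \to E_g$. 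By the very definition of $\Gamma$, two edges $e_g(h)$ and $e_g(h')$ incident to $g$ are adjacent in $\Gamma$ iff $h' \in hT$, i.e.\ iff the corresponding vertices of $\Delta$ are neighbors, which is the Alon--Capalbo adjacency rule.

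Specializing to $G = \PGL_2(7^n)$, $S = S(n)$, $s_0 = b$, $K = \PSL_2(7^n)$, $H = C_8$, $\theta = \theta_n$ and $T = \{1,4,7\}$, Section~\ref{sec:ET-Ramanujan} says that $\Gamma'_n$ is an $8$-regular bipartite Ramanujan graph whose bipartition cosets are $\PSL_2(7^n)$ and $\PSL_2(7^n)\cdot b$, and that it is simply-generator-symmetric via $\theta_n$; in particular, the hypotheses of both Proposition~\ref{prop:line-graph} and Theorem 2.1 of \cite{alon2002explicit} are satisfied. Applying the latter yields that $\Gamma_n$ is a $6$-regular $(\alpha, 1/10)$-unique-neighbor expander for some absolute $\alpha > 0$, while Proposition~\ref{prop:line-graph} provides the isomorphism
\[
\Gamma_n \;\cong\; \Cay\bigl(\PSL_2(7^n) \rtimes_{\theta_n} C_8,\ \Sigma_1^{\{1,4,7\}} \cup \Sigma_2^{\{1,4,7\}}\bigr),
\]
identifying $\Gamma_n$ as a Cayley graph on the stated group.

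The main conceptual point, already absorbed into Section~\ref{sec:Line-Graphs}, is reconciling the two viewpoints: in the Alon--Capalbo framework the local identifications $E'_g \leftrightarrow V(\Delta)$ are arbitrary and vertex-by-vertex, while to obtain a Cayley description one needs the family $\{e_g\}_{g \in G}$ to be sufficiently coherent that $\PSL_2(7^n) \rtimes_{\theta_n} C_8$ acts simply transitively on $E'$. Once this has been set up, the theorem is simply the conjunction of Theorem 2.1 of \cite{alon2002explicit} with Proposition~\ref{prop:line-graph}; the only residual combinatorial check is the identification $\Cay(C_8, \{1,4,7\}) \cong \Delta$ carried out above.
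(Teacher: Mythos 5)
Your proposal is correct and follows essentially the same route as the paper: realize $\Gamma_n = \Gamma(\PGL_2(7^n), S(n), b, \PSL_2(7^n), C_8, \theta_n, \{1,4,7\})$ simultaneously as an instance of the Alon--Capalbo product (yielding unique-neighbor expansion via their Theorem 2.1) and, via Proposition~\ref{prop:line-graph}, as the Cayley graph on $\PSL_2(7^n)\rtimes_{\theta_n}C_8$. Your explicit check that $\Cay(C_8,\{1,4,7\})$ is the Alon--Capalbo graph $\Delta$ is a detail the paper leaves implicit, but the argument is otherwise identical.
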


\section{Symmetric Good LDPC Codes}

We shall refer to linear error correcting codes simply as 'codes'.
A code $C\subset\mathbb{F}_{2}^{X}$ is \emph{symmetric} (resp. \emph{simply-symmetric})
with respect to a group $G$ if there is a transitive (resp. \emph{simply-transitive})
action of $G$ on $X$ such that the corresponding coordinate-interchanging
action of $G$ on $\mathbb{F}_{2}^{X}$ preserves $C$. For a code
$C\subset\mathbb{F}_{2}^{X}$, we denote its dual by $C^{\perp}\subset\mathbb{F}_{2}^{X}$
(this is the set of all vectors ``orthogonal'' to $C$), and think
of it as the constraints defining the code $C$ (i.e. they define
linear functionals whose common set of solutions is $C$). A spanning
set for $C^{\perp}$ is called a \emph{set of defining constraints}.
By a standard abuse of terminology, we will refer to a code $C$ together
with a specific set of defining constraints simply as a 'code'. Note
that if $C$ is a symmetric code with respect to a group $G$, then
the dual code $C^{\perp}$ is also symmetric with respect to $G$.
Finally, a family of codes is \emph{LDPC} if the defining constraints
of the codes in the family are of bounded Hamming weight.

One way to obtain symmetric codes is by 'codes defined on groups':
For a group $G$, we say that a code $C\subset\mathbb{F}_{2}^{G}$
is a \emph{code defined on $G$} if $C$ is invariant under the action
of $G$. Equivalently, a code $C\subset\mathbb{F}_{2}^{G}$ is a code
defined on $G$ if it is defined by a $G$-invariant set of constraints.
Finally, a code $C$ is defined on the group $G$ if and only if it
is simply-symmetric with respect to $G$.

Another way to obtain (usually not symmetric) codes is by 'codes defined
on graphs': Let $\Gamma=\left(V,E\right)$ be an $l$-regular graph
and let $B\subset\mathbb{F}_{2}^{H}$ be a 'small' code of length
$l$ (i.e. $H$ is a set of cardinality $l$). For each vertex $v$
of $\Gamma$, fix a bijection $h\mapsto e\left(v,h\right)$ from $H$
to $E_{v}$. Let $C\subset\mathbb{F}_{2}^{E}$ be the code consisting
of functions $f:E\rightarrow\mathbb{F}_{2}$ satisfying the following
local constraint for each vertex $v$ of $\Gamma$:
\[
\left(f\left(e\left(v,h\right)\right)\right)_{h\in H}\in B
\]
These local constraints are referred to as 'vertex consistency'. This
idea is due to Tanner (\cite{zbMATH03745089}) and Sipser-Spielman (\cite{zbMATH01004587}),
who refer to such codes (and similar codes) as \emph{expander codes}
in case the graph $\Gamma$ is a good expander. If the graph $\Gamma$
is a Cayley graph $\Gamma=\Cay\left(G,S\right)$ then, instead
of arbitrarily fixing a bijection between $H$ and $E_{v}$ for every
vertex $v$ of $\Gamma$, we can fix one bijection $h\mapsto s_{h}$
from $H$ to $S$. Then, for each vertex $v$ of $\Gamma$, we define
the bijection from $H$ to $E_{v}$ by $e\left(v,h\right)=\left(v,v\cdot s_{h}\right)$
(where $\cdot$ is multiplication in the group $G$), and proceed
as before to define the code $C$. Codes defined on Cayley graphs
in this manner are referred to as \emph{Cayley codes} by Kaufman and
Wigderson in \cite{zbMATH05799072} and are denoted by $\Cay\left(G,S,B\right)$.

As stated in \cite{zbMATH06294581}, Cayley codes are in general
\emph{not }symmetric\emph{. }Therefore, we assume further that $H$
is a group (and not merely a set), that the code $B$ is a code defined
on $H$, and that the Cayley graph $\Cay\left(G,S\right)$ is
simply-generator-symmetric with respect to the group $H$. Note that
in this case we have $\left|H\right|=\left|S\right|$. Let $\theta:H\rightarrow\Aut\left(G\right)$
be the group homomorphism inducing the transitive action of $H$ on
$S$. Now, instead of arbitrarily fixing a bijection between $H$
and $S$, we only fix one generator $s_{0}\in S$. We define the bijection
from $H$ to $S$ by $h\mapsto\theta_{h}\left(s_{0}\right)$, and
proceed as before to define the code $C$. This idea is due to Kaufman
and Lubotzky and is presented in \cite{zbMATH06294581}. The resulting
code $C$ is symmetric with respect to the group $G\rtimes_{\theta}H$.
However, the action of this group on the code $C$ is not \emph{simply
}transitive (indeed, the cardinality of the group $G\rtimes_{\theta}H$
is not equal to the length of the code $C$; the size of the group
is twice the length of the code). Therefore, $C$ is not a code defined
on the group $G\rtimes_{\theta}H$.

However, if the graph $\Cay\left(G,S\right)$ is \emph{bipartite},
then there is an index $2$ subgroup $K$ of $G$, disjoint from $S$.
Then, by Section \ref{sec:Line-Graphs}, the code $C$ is a code defined
on $K\rtimes_{\theta}H$ (and not just a symmetric code with respect
to $G\rtimes_{\theta}H$, as in \cite{zbMATH06294581}). If the
small code $B$ is defined by $m$ $H$-orbits of constraints, then
the code $C$ is defined by $2m$ $K\rtimes_{\theta}H$-orbits of
constraints (or $m$ $G\rtimes_{\theta}H$-orbits of constraints,
as in \cite{zbMATH06294581}). We summarize this result in the case
where the small code $B$ is defined by a single orbit of constraints
(since this is sufficient for our application):
\begin{prop}
\label{prop:group-codes}For $G,S,s_{0},K,H,\theta$ as in section \ref{sec:Line-Graphs}, and a 'small' code $B$ defined on the group $H$ by the orbit of the single constraint $\sum_{t\in T}f\left(t\right)=0$
for some $T\subset H$, the Cayley code $\Cay\left(G,S,B\right)$
is the same as the code defined on the group $K\rtimes_{\theta}H$ with defining
constraints consisting of the $K\rtimes_{\theta}H$-orbits of the
two constraints $\sum_{t\in T}f\left(\left(1,t\right)\right)=0$ and
$\sum_{t\in T}f\left(\left(s_{0}\cdot\theta_{t}\left(s_{0}^{-1}\right),t\right)\right)=0$.
\end{prop}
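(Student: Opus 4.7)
The plan is to transfer the Cayley code $\Cay(G,S,B)$ from its natural ambient space $\mathbb{F}_2^{E(\Cay(G,S))}$ to $\mathbb{F}_2^{K\rtimes_\theta H}$ via the bijection $f$ of Proposition \ref{prop:line-graph}, and then to match the transferred defining constraints with the two stated $K\rtimes_\theta H$-orbits.

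Unwinding the definitions: since $B$ is the code on $H$ cut out by the $H$-orbit of $\sum_{t\in T}f(t)=0$, the Cayley code $\Cay(G,S,B)$ is exactly the set of $c\in\mathbb{F}_2^{E(\Cay(G,S))}$ satisfying $\sum_{t\in T}c(e(v,ht))=0$ for every $v\in G$ and every $h\in H$, with $e(v,h)=(v,v\theta_h(s_0))$ in the Cayley-code convention. Using the disjoint decomposition $G=K\sqcup Ks_0$, I split these equations into two families and match each family to one of the two stated orbits.

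For $v=k\in K$: a direct inspection of $f((k,h))=(k,k\theta_h(s_0))$ yields $f^{-1}(e(k,ht))=(k,ht)$, and the semidirect-product multiplication gives $(k,ht)=(k,h)\cdot(1,t)$. Therefore the pulled-back equation at $(k,h)$ reads $\sum_{t\in T}c'((k,h)\cdot(1,t))=0$, which is the left translate by $(k,h)$ of the first stated constraint. Letting $(k,h)$ range over $K\rtimes_\theta H$ produces the full first orbit.

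For $v=\tilde k s_0\in Ks_0$: here I use simple transitivity of $H$ on $S$ together with $s_0^{-1}\in S=S^{-1}$ to fix the unique $\tau_0\in H$ with $\theta_{\tau_0}(s_0)=s_0^{-1}$. Matching the $K$-endpoint $\tilde k s_0\theta_{ht}(s_0)$ and the $Ks_0$-endpoint $\tilde k s_0$ of the undirected edge $e(\tilde k s_0,ht)$ against the formula for $f$ gives $f^{-1}(e(\tilde k s_0,ht))=(\tilde k s_0\theta_{ht}(s_0),ht\tau_0)$. A direct semidirect-product computation---of the same flavor as the one carried out for $\sigma_2$ in the proof of Proposition \ref{prop:line-graph}, using the homomorphism property of $\theta$ together with $\theta_{\tau_0}(s_0)=s_0^{-1}$---shows this element equals $(\tilde k s_0\theta_h(s_0),h\tau_0)\cdot(s_0\theta_t(s_0^{-1}),t)$. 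Hence the pulled-back equation at $(\tilde k s_0,h)$ is the left translate of the second stated constraint by $(\tilde k s_0\theta_h(s_0),h\tau_0)$, and as $(\tilde k,h)$ varies over $K\times H$ this translating element sweeps out all of $K\rtimes_\theta H$, producing the full second orbit.

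The main delicacy is the bookkeeping in the $Ks_0$-case: the Cayley-code bijection $h\mapsto e(v,h)$ at $v\in Ks_0$ differs from the bijection $e_v$ used in Section \ref{sec:Line-Graphs} by the shift $h\mapsto h\tau_0$, and this shift has to be propagated through the semidirect-product arithmetic correctly before the translate presentation becomes manifest. Once both cases are combined, the Cayley code and the group code are defined by the same list of linear equations on $\mathbb{F}_2^{K\rtimes_\theta H}$, so the two codes coincide.
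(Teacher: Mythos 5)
Your route is the same one the paper intends: the paper gives no written proof of this proposition, presenting it as a direct consequence of the isomorphism $f$ of Proposition \ref{prop:line-graph}, with the vertex constraints at $K$-vertices producing the first orbit and those at $Ks_{0}$-vertices the second. Your isolation of the mismatch between the Cayley-code bijection $h\mapsto e\left(v,h\right)=\left(v,v\cdot\theta_{h}\left(s_{0}\right)\right)$ and the Section \ref{sec:Line-Graphs} bijection $e_{v}$ at vertices $v\in Ks_{0}$, corrected by the shift $h\mapsto h\tau_{0}$, is precisely the bookkeeping the paper leaves implicit, and your treatment of the $K$-vertex family is complete.

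There is, however, one step that does not follow from the ingredients you cite. Let $\tau_{0}\in H$ be the unique element with $\theta_{\tau_{0}}\left(s_{0}\right)=s_{0}^{-1}$, so that $\tau_{0}^{2}=1$ by freeness of the action of $H$ on $S$. The element you must produce is $f^{-1}\left(e\left(\tilde{k}s_{0},ht\right)\right)=\left(\tilde{k}s_{0}\theta_{ht}\left(s_{0}\right),ht\tau_{0}\right)$, whereas the product you assert it equals computes to
\[
\left(\tilde{k}s_{0}\theta_{h}\left(s_{0}\right),h\tau_{0}\right)\cdot\left(s_{0}\theta_{t}\left(s_{0}^{-1}\right),t\right)=\left(\tilde{k}s_{0}\theta_{h\tau_{0}t\tau_{0}}\left(s_{0}\right),h\tau_{0}t\right).
\]
The two agree if and only if $\tau_{0}t=t\tau_{0}$, and this is not a consequence of the homomorphism property of $\theta$ together with $\theta_{\tau_{0}}\left(s_{0}\right)=s_{0}^{-1}$. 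For non-abelian $H$ the pulled-back $Ks_{0}$-constraints form the $K\rtimes_{\theta}H$-orbit of $\sum_{w\in T\tau_{0}}f\left(\left(s_{0}\theta_{w}\left(s_{0}^{-1}\right),w\right)\right)=0$ rather than of the stated second constraint, and these orbits coincide only when $T\tau_{0}$ is a left translate of $T$. In every instance the paper actually uses ($H=C_{q+1}$ cyclic) the commutation is automatic, so your argument is correct where it is needed; but since the proposition is stated for general $H$ as in Section \ref{sec:Line-Graphs}, you should either record the hypothesis that $\tau_{0}$ commutes with $T$ (or that $H$ is abelian) or observe explicitly that it holds in the application.
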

Finally, note that in \cite{zbMATH06294581} the Cayley code construction
is applied with a sequence $\left\{ \Gamma'_{n}\right\} $ of $q+1$-regular
Ramanujan graphs from \cite{zbMATH02183844}, where $q=4093$,
and a certain code $B_{0}$ defined on $C_{q+1}$. Each graph $\Gamma'_{n}$
in the sequence is a Cayley graph on $\PGL_{2}\left(q^{n}\right)$
and is simply-generator-symmetric with respect to $C_{q+1}$. These
graphs are in fact bipartite, although the proofs in \cite{zbMATH06294581}
do not require bipartiteness. Therefore, by Proposition \ref{prop:group-codes},
we conclude that the codes constructed in \cite{zbMATH06294581}
are in fact simply-symmetric. These codes are proved in \cite{zbMATH06294581}
to have both rate and normalized distance bounded away from zero (i.e.
they are \emph{asymptotically good} codes). Also, they clearly are
LDPC codes. We have thus proved the following more elaborate version
of Theorem \ref{thm:main-codes}:
\begin{thm}
\label{thm:main-codes-elaborate}There is an asymptotically good infinite family
of LDPC codes $\left\{ K_{n}\right\} $, such that for every $n$,
the code $K_{n}$ is simply-symmetric with respect to the group $G_{n}=\PSL_{2}\left(q^{n}\right)\rtimes_{\theta_{n}}C_{q+1}$,
where $q=4093$. Furthermore, for every $n$, the code $K_{n}$ is
defined by constraints consisting of two $G_{n}$-orbits of constraints.
\end{thm}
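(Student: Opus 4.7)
The plan is to apply Proposition \ref{prop:group-codes} directly to the Cayley code construction of Kaufman-Lubotzky \cite{zbMATH06294581}. Recall that in their construction, the big Cayley graphs are the $(q+1)$-regular bipartite Ramanujan graphs $\Gamma'_n = \Cay(\PGL_2(q^n), S(n))$ with $q = 4093$, coming from \cite{zbMATH02183844}, and the small code $B_0$ is a certain code defined on the cyclic group $C_{q+1}$. As recalled in Section \ref{sec:ET-Ramanujan}, each $\Gamma'_n$ is simply-generator-symmetric with respect to $H = C_{q+1}$ via a homomorphism $\theta_n: C_{q+1} \to \Aut(\PGL_2(q^n))$, and is bipartite with $K = \PSL_2(q^n)$ as an index-$2$ subgroup disjoint from $S(n)$. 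Taking $s_0 = b$, the specific generator singled out in Section \ref{sec:ET-Ramanujan}, all hypotheses of Proposition \ref{prop:group-codes} are in place.

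Next I would verify that the small code $B_0$ of \cite{zbMATH06294581} is defined by a single $C_{q+1}$-orbit of a constraint of the form $\sum_{t \in T} f(t) = 0$ for some $T \subset C_{q+1}$; this is the hypothesis of Proposition \ref{prop:group-codes}. With this in hand, the proposition identifies the Cayley code $K_n = \Cay(\PGL_2(q^n), S(n), B_0)$ with a code defined on $G_n = \PSL_2(q^n) \rtimes_{\theta_n} C_{q+1}$ by exactly two explicit $G_n$-orbits of constraints. Since every code defined on a group is, by the discussion preceding Proposition \ref{prop:group-codes}, simply-symmetric with respect to that group, $K_n$ is simply-symmetric with respect to $G_n$, which is the main content of the theorem.

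Finally, the LDPC and asymptotic goodness properties are inherited from \cite{zbMATH06294581} without change. The two defining constraints have Hamming weight at most $|T| \leq q + 1$, independent of $n$, giving the LDPC property. Asymptotic goodness — rate and normalized distance bounded below by positive absolute constants — is exactly what is proved in \cite{zbMATH06294581} for this family; it transfers verbatim since Proposition \ref{prop:group-codes} only re-describes the defining constraints of an unchanged code. The main obstacle is therefore not conceptual but bookkeeping: one must match the notation and indexing of Proposition \ref{prop:group-codes} to the conventions of \cite{zbMATH06294581} and confirm the single-orbit presentation of $B_0$. Once this is done, the theorem reduces to a clean application of previously established results.
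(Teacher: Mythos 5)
Your proposal follows essentially the same route as the paper: apply Proposition \ref{prop:group-codes} to the Kaufman--Lubotzky Cayley codes built from the bipartite simply-generator-symmetric Ramanujan graphs of Lubotzky--Samuels--Vishne, and inherit asymptotic goodness and the LDPC property directly from \cite{zbMATH06294581}. Your explicit flagging of the need to check that $B_0$ is presented by a single $C_{q+1}$-orbit of one constraint is a point the paper handles only implicitly (via the remark preceding Proposition \ref{prop:group-codes}), but otherwise the arguments coincide.
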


\section{Symmetric Good LDPC Codes with Improved Density}

If $C$ is a code (together with a set of defining constraints), then
we define the \emph{density} of $C$ as the maximal Hamming weight
of a defining constraint of $C$. The density of the family of codes
$\left\{ K_{n}\right\} $ is proved in \cite{zbMATH06294581} to
have density bounded from above by $4094$. We shall define a variation
of this construction, with density $20$. The density of an expander
code on a graph $\Gamma$ and a small code $B$ equals the density
of the small code $B$. Thus, our goal is to define an infinite family of simply-symmetric
expander codes using a small code with density $20$. We shall define
later a code $B'$ of length $158$, rate $\frac{40}{79}$, distance
$15$ and density $20$. Let $\left\{ K'_{n}\right\} $ be the family
of expander codes constructed symmetrically using the $158$-regular
simply-generator-symmetric Ramanujan graphs of \cite{zbMATH02183844}
and the small code $B'$. The family $\left\{ K'_{n}\right\} $ is
a family of simply-symmetric codes of density $20$. We now show that
this is a family of asymptotically good codes.

By Lemma 15 of \cite{zbMATH01004587}, an expander code on a $k$-regular
graph $\Gamma$ and a small code $B$ has rate at least $2\cdot\rate\left(B\right)-1$
and normalized distance at least $\left(\frac{\distance\left(B\right)-\lambda}{k-\lambda}\right)^{2}$
where $\lambda$ is the second largest eigenvalue of the adjacency
matrix of $\Gamma$ (assuming $\distance\left(B\right)>\lambda$).
Thus, since $\rate\left(B'\right)=\frac{40}{79}>\frac{1}{2}$,
we have a guarantee that the codes $\left\{ K'_{n}\right\} $ have
rate bounded away from zero. However, since $\distance\left(B'\right)=15$,
and since $2\sqrt{157}\approx25.1$, the method of \cite{zbMATH01004587}
is not enough to show that the codes $\left\{ K'_{n}\right\} $ have
normalized distance bounded away from zero. Nevertheless, we do have
$15>1+\sqrt{157}\approx13.5$, and thus the next lemma shows that
the codes $\left\{ K'_{n}\right\} $ do have distance bounded away
from zero:
\begin{lem}
An expander code $C$ defined on a $k$-regular Ramanujan graph $\Gamma$
using a 'small' code $B$ of length $k$ and distance $d$ larger
than $1+\sqrt{k-1}$ has normalized distance larger than some constant
$\alpha$ which depends only on $k$ and $d$.\end{lem}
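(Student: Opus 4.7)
The plan is to adapt the Sipser--Spielman minimum-distance argument by working with the vertex--edge incidence matrix $M \in \{0,1\}^{V \times E}$ of $\Gamma$ in place of its adjacency matrix $A$. Since $MM^T = A + kI$, the Ramanujan bound $\lambda_2(A) \le 2\sqrt{k-1}$ translates to $\sigma_2(M)^2 \le k + 2\sqrt{k-1} = (1+\sqrt{k-1})^2$, so $\sigma_2(M) \le 1 + \sqrt{k-1}$. Equivalently, the bipartite vertex--edge incidence graph of $\Gamma$, which is $(k,2)$-biregular, is a bipartite Ramanujan graph in the biregular sense, with nontrivial singular values bounded by $1+\sqrt{k-1}$. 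This is the spectral input that makes the hypothesis $d > 1+\sqrt{k-1}$ natural: it matches the second singular value of $M$.

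I would then pick a nonzero codeword $f \in C$ with support $F \subseteq E(\Gamma)$, and let $V_F \subseteq V(\Gamma)$ be the set of vertices $v$ at which $f|_{E_v}$ is nonzero. The distance property of $B$ gives $|F \cap E_v| \ge d$ for each $v \in V_F$, hence $d |V_F| \le 2|F|$. Setting $w := M \mathbf{1}_F \in \mathbb{R}^V$, the entries of $w$ lie in $\{0\} \cup [d, k]$ and $\sum_v w_v = 2|F|$. Decomposing $\mathbf{1}_F = (|F|/|E|)\mathbf{1}_E + u$ with $u \perp \mathbf{1}_E$, one obtains $w = (2|F|/n)\mathbf{1}_V + Mu$ together with the spectral control $\|Mu\|^2 \le \sigma_2(M)^2 \cdot |F|(1-|F|/|E|)$.

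The main step is to combine this spectral upper bound on $\|w\|_2^2$ with lower bounds coming from the structural constraints $w_v \in \{0\} \cup [d,k]$ and $d|V_F| \le 2|F|$, so as to derive an inequality that is violated unless $|F|/|E|$ exceeds some explicit $\alpha(k,d) > 0$. The condition $d > 1+\sqrt{k-1}$ is what makes this inequality nontrivial; it is the incidence-matrix analogue of the classical Sipser--Spielman condition $d > \lambda_2(A)$, and gives a clean control on $\alpha(k,d)$ depending only on $k$ and $d$.

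The hard part will be extracting the clean threshold $d > 1+\sqrt{k-1}$. A naive combination using only the trivial lower bound $\|w\|_2^2 \ge 2d|F|$ together with the spectral upper bound recovers just the strictly weaker threshold $d > (1+\sqrt{k-1})^2/2$. The "slight variation" alluded to in the paper presumably uses instead the Cauchy--Schwarz lower bound $\|w\|_2^2 \ge 4|F|^2/|V_F|$, followed by $|V_F| \le 2|F|/d$, arranging the inequalities so that the spectral gap $\sigma_2(M) = 1+\sqrt{k-1}$ enters linearly rather than after a squaring step. Getting the order of these substitutions right, while tracking the lower-order terms in $|F|/|E|$ and $|V_F|/n$, is the one nonroutine calculation in the plan.
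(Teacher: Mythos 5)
There is a genuine gap, and it sits exactly where you flagged ``the one nonroutine calculation.'' The paper's proof does not run a Sipser--Spielman-type quadratic-form argument at all: it invokes Kahale's Theorem 4.2 (\cite{zbMATH01103555}), which says that in a $k$-regular Ramanujan graph every nonempty vertex subset of size at most $k^{-1/\delta}\lvert V\rvert$ induces a subgraph of average degree at most $(1+\sqrt{k-1})(1+C\delta)$. Applying this to the set $T$ of vertices touched by the support of a word of small weight produces a vertex of $T$ meeting fewer than $d$ (but at least one) support edges, whose local constraint is then violated. The constant $1+\sqrt{k-1}$ in the hypothesis is Kahale's average-degree bound for small induced subgraphs; the numerical coincidence $\sigma_2(M)=\sqrt{k+2\sqrt{k-1}}=1+\sqrt{k-1}$ is correct but is not what drives the proof.

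Your proposed route cannot reach the stated threshold. Carrying out your second-moment computation with $\beta=\lvert F\rvert/\lvert E\rvert$ gives $\lVert w\rVert^2 \le 2\beta k\lvert F\rvert + \sigma_2(M)^2\lvert F\rvert(1-\beta)$, and against the lower bound $\lVert w\rVert^2\ge 2d\lvert F\rvert$ this yields a nonvacuous conclusion only when $d > (1+\sqrt{k-1})^2/2$, which is roughly $k/2$ --- worse even than the classical Sipser--Spielman threshold $2\sqrt{k-1}$, and useless for the paper's application ($k=158$, $d=15$, while $(1+\sqrt{157})^2/2\approx 91.6$). The fix you suggest does not help: Cauchy--Schwarz gives $\lVert w\rVert^2\ge 4\lvert F\rvert^2/\lvert V_F\rvert$, and substituting $\lvert V_F\rvert\le 2\lvert F\rvert/d$ returns exactly $2d\lvert F\rvert$ again; the two lower bounds are saturated by the same configuration (all nonzero entries of $w$ equal to $d$), so there is no rearrangement of these inequalities in which $\sigma_2(M)$ ``enters linearly.'' The improvement from $2\sqrt{k-1}$ to $1+\sqrt{k-1}$ genuinely requires Kahale's theorem, a stronger combinatorial fact about Ramanujan graphs that does not follow from a single Rayleigh-quotient or singular-value estimate; it is also where the size restriction $\lvert F\rvert\le\alpha\lvert E\rvert$ (hence the dependence of $\alpha$ on $k$ and $d$ through $\delta$) actually enters the argument, a point your plan leaves unaddressed.
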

\begin{proof}
Denote $\Gamma=\left(V,E\right)$. By Theorem 4.2 of \cite{zbMATH01103555},
for every $\delta>0$, and every nonempty subset $X$ of $V$ of cardinality
at most $k^{-1/\delta}\cdot\left|V\right|$, the average degree of
the induced subgraph of $\Gamma$ on $X$ is at most $\left(1+\sqrt{k-1}\right)\cdot\left(1+C\delta\right)$,
where $C>0$ is an absolute constant (see also \cite{alon2002explicit},
Theorem 2.2). Let $\epsilon>0$ be sufficiently small such that $1+\sqrt{k-1}+\epsilon<d$,
let $\delta=\frac{\epsilon}{C\left(1+\sqrt{k-1}\right)}>0$, and let
$\alpha=k^{-1/\delta-1}>0$. Let $f:E\rightarrow\mathbb{F}_{2}$
be a nonzero word of Hamming weight $w$ such that $w\leq\alpha\cdot\left|E\right|$.
Let $S$ be the set of edges $e$ of $\Gamma$ such that $f\left(e\right)=1$.
Let $T$ be the subset of $V$ of all vertices in $\Gamma$ which
are incident to at least one edge in $S$. Let $\Gamma\left[T\right]$
be the subgraph of $\Gamma$ induced on $T$. Since $0<\left|T\right|\le2\cdot\left|S\right|\leq2\alpha\cdot\left|E\right|=\alpha\cdot k\cdot\left|V\right|=k^{-1/\delta}\cdot\left|V\right|$,
the average degree in $\Gamma\left[T\right]$ is at most $\left(1+\sqrt{k-1}\right)\cdot\left(1+C\cdot\delta\right)=\left(1+\sqrt{k-1}\right)\cdot\left(1+\frac{\epsilon}{1+\sqrt{k-1}}\right)=1+\sqrt{k-1}+\epsilon<d$.
Therefore, $\Gamma\left[T\right]$ has a vertex $v$ of degree less
than $d$. The local subword of $f$ around the vertex $v$ is of
positive Hamming weight smaller than $d$ and thus violates a constraint.
Therefore, $f$ is not a codeword of $C$.
\end{proof}
The improvement in this lemma over Lemma 15 of \cite{zbMATH01004587}
boils down to the invocation of Kahale's Theorem 4.2 in \cite{zbMATH01103555}
instead of the Alon-Chung Lemma (\cite{zbMATH04073036}, Lemma 2.3). This
is the same tool that enables the proof of unique-neighbor expansion
in \cite{alon2002explicit}.

Thus, we get the following improved version of Theorem \ref{thm:main-codes-elaborate}:
\begin{thm}
\label{thm:main-codes-elaborate-improved-density}There is an asymptotically
good infinite family of LDPC codes $\left\{ K'_{n}\right\} $, such that for
every $n$, the code $K'_{n}$ is simply-symmetric with respect to
the group $G'_{n}=\PSL_{2}\left(q^{n}\right)\rtimes_{\theta_{n}}C_{q+1}$,
where $q=157$. Each code $K'_{n}$ has density $20$, and is defined
by constraints consisting of two $G'_{n}$-orbits of constraints.
\end{thm}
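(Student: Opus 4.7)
The plan is to apply the symmetric expander code construction of Proposition \ref{prop:group-codes} to the $158$-regular bipartite simply-generator-symmetric Ramanujan Cayley graphs of \cite{zbMATH02183844}, together with a small cyclic code $B'$ on $C_{158}$ of length $158$, dimension $80$, minimum distance $15$, and a single defining-constraint orbit of Hamming weight $20$. With such a $B'$ in hand, the theorem follows almost directly from tools established in the preceding sections.

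First I would fix $q=157$ (an odd prime, so $d=q+1=158$) and invoke Section \ref{sec:ET-Ramanujan} to obtain the infinite family $\{\Gamma'_n\}$ of $158$-regular bipartite simply-generator-symmetric Ramanujan Cayley graphs with $\Gamma'_n=\Cay(\PGL_2(157^n),S(n))$, simply-generator-symmetric with respect to $H=C_{158}$ via homomorphisms $\theta_n$, with $K_n=\PSL_2(157^n)$ of index $2$ and disjoint from $S(n)$. Second, I would exhibit the small code $B'$ on $C_{158}$ as the common zero set of the $C_{158}$-orbit of a single constraint $\sum_{t\in T} f(t)=0$ for some $T\subset C_{158}$ with $|T|=20$. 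Forming the Cayley code $\Cay(\PGL_2(157^n),S(n),B')$ and applying Proposition \ref{prop:group-codes}, I would identify $K'_n$ with a code defined on $G'_n=\PSL_2(157^n)\rtimes_{\theta_n} C_{158}$, whose defining constraints consist of two $G'_n$-orbits of constraints of weight $20$. Since a code defined on a group is precisely a simply-symmetric code with respect to that group, this immediately yields the simply-symmetric structure, the density bound of $20$, the LDPC property, and the two-orbit description in the theorem.

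For asymptotic goodness I would verify rate and distance separately. The rate bound is straightforward: by Lemma 15 of \cite{zbMATH01004587}, $\rate(K'_n)\geq 2\cdot\rate(B')-1=2\cdot\frac{40}{79}-1=\frac{1}{79}$, bounded away from zero. For the normalized distance, the classical Sipser-Spielman bound is insufficient since $2\sqrt{157}\approx 25.1>15=\distance(B')$; however, $\distance(B')=15>1+\sqrt{157}\approx 13.53$, so the lemma stated just before the theorem applies with $k=158$ and $d=15$, producing a positive constant lower bound on the normalized distance depending only on $k$ and $d$.

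The main obstacle is the construction of the small code $B'$ itself: one needs a binary cyclic code of length $158$ whose dual is generated by the cyclic shifts of a single weight-$20$ codeword, with dimension exactly $80$ (equivalently, the corresponding circulant parity-check matrix has rank $78$) and minimum distance at least $15$. Finding such a code with this precise constellation of parameters — and verifying the distance lower bound $15$ in particular — is the delicate part; it likely requires either a specific number-theoretic construction in $\mathbb{F}_2[x]/(x^{158}-1)$ or a direct computational search. Once $B'$ is in place, the assembly of the remaining pieces is essentially mechanical.
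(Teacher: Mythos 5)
Your overall architecture matches the paper's proof exactly: the $158$-regular bipartite simply-generator-symmetric Ramanujan graphs of \cite{zbMATH02183844} with $q=157$, Proposition \ref{prop:group-codes} to obtain the simply-symmetric structure and the two-orbit description, Lemma 15 of \cite{zbMATH01004587} for the rate bound $2\cdot\frac{40}{79}-1>0$, and the Kahale-based lemma with $k=158$, $d=15>1+\sqrt{157}$ for the normalized distance. All of that is correct and is precisely what the paper does.

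The one genuine gap is the piece you yourself flag: you never actually produce the small code $B'$, and without it the theorem is not proved. The paper closes this gap concretely. It first takes $B''$ to be the binary cyclic code of length $79$ generated by an explicit degree-$39$ polynomial $g(X)$; by the reference \cite{zbMATH03593474} this code has rate $\frac{40}{79}$ and distance $15$, and a direct computation shows the check polynomial $h(X)=(X^{79}-1)/g(X)$ has exactly $20$ nonzero coefficients, so the density is $20$. Then $B'$ is defined as the cyclic code of length $158=2\cdot 79$ generated by $g(X)^{2}$; since squaring in characteristic $2$ sends $h(X)$ to $h(X^{2})$, the codewords of $B'$ are exactly interleavings of two codewords of $B''$, so $B'$ inherits rate $\frac{40}{79}$, distance $15$, and density $20$, and its dual is generated by a single $C_{158}$-orbit of a weight-$20$ constraint, as Proposition \ref{prop:group-codes} requires. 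So your instinct that this step needs either a known algebraic construction or a computation was right, but stating the existence of $B'$ as a hypothesis rather than exhibiting it leaves the proof incomplete; everything else in your proposal goes through as written once $B'$ is supplied.
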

Note that the improvement in density over the symmetric good LDPC codes of \cite{zbMATH06294581}
comes on the expense of a lower guaranteed normalized distance.

We now define a code $B'$ with the desired properties. See Chapter
6 of \cite{zbMATH00053945} for the theory of cyclic codes. Let $B''$ be
the cyclic code of length $79$ generated by
\begin{eqnarray*}
g\left(X\right) & = & X^{39}+X^{36}+X^{35}+X^{31}+X^{30}+X^{29}+X^{27}+X^{26}+X^{25}+X^{24}+X^{21}+\\
 &  & X^{20}+X^{19}+X^{18}+X^{16}+X^{14}+X^{13}+X^{11}+X^{5}+X^{4}+X^{2}+X+1
\end{eqnarray*}

By \cite{zbMATH03593474}, the code $B''$ has rate $\frac{40}{79}$
and distance $15$. Direct computation shows that $h\left(X\right)=\left(X^{79}-1\right)/g\left(X\right)$
has exactly $20$ nonzero coefficients. Thus, the density of the code
$B''$ is $20$ (see Section 6.2 of \cite{zbMATH00053945}). Let $B'$ be
the cyclic code of length $158=2\cdot79$ generated by $g\left(X\right)^{2}$.
Note that the codewords of $B'$ are exactly the result of interleaving
two codewords of $B''$. The code $B'$ has rate $\frac{40}{79}$,
distance $15$ and density $20$.

\section{Conclusion}

We make use of of the simple-generator-symmetry of the Ramanujan graphs
of \cite{zbMATH02183844}, and of our observation that the
line graph of a simply-generator-symmetric \emph{bipartite} Cayley graph
is itself a Cayley graph. This observation is crucial in the realization
of some of the unique-neighbor expanders of \cite{alon2002explicit} as
Cayley graphs. It also allows us to see that the asymptotically good
symmetric codes of \cite{zbMATH06294581} are in fact simply-symmetric. Finally, we improve the density of these codes.

\section{Acknowledgments}
This work was done as part of an M.Sc. thesis submitted to the Hebrew University
of Jerusalem. The author would like to thank his advisor Prof. Alexander
Lubotzky for his help and guidance. The author would also like to thank ETH
Institute for Theoretical Studies for the hospitality.

\section*{References}

\bibliography{mybibfile}

\end{document}